\documentclass[12pt]{amsart}
\usepackage{amsfonts,graphics,amsmath,amsthm,amsfonts,amscd, amssymb,amsmath,latexsym,enumitem}
\usepackage[left=1.02in,top=1.0in,right=1.02in,bottom=1.0in]{geometry}
\usepackage[all]{xy}
\usepackage{color}
\pagestyle{plain}

\newcommand{\bQ}{\mathbb{Q}}

\newcommand{\sF}{\mathcal{F}}
\newcommand{\sG}{\mathcal{G}}
\newcommand{\sH}{\mathcal{H}}

\newcommand{\sO}{\mathcal{O}}

\newcommand{\sT}{\mathcal{T}}

\newcommand{\sHom}{Hom}

\DeclareMathOperator{\reg}{reg}
\DeclareMathOperator{\rk}{rk}

\DeclareMathOperator{\im}{Im}

\DeclareMathOperator{\Supp}{Supp}
\DeclareMathOperator{\Ext}{Ext}
\DeclareMathOperator{\sstab}{ss}
\DeclareMathOperator{\nilp}{nilp}

\theoremstyle{plain}
\newtheorem{theorem}{Theorem}[section]
\newtheorem{corollary}[theorem]{Corollary}
\newtheorem{lemma}[theorem]{Lemma}

\newtheorem{notation}[theorem]{Notation}

\newtheorem{proposition}[theorem]{Proposition}
\newtheorem{definition-lemma}[theorem]{Definition-Lemma}

\newtheorem{definition}[theorem]{Definition}

\theoremstyle{definition}

\newtheorem*{acknowledgement}{Acknowledgments}

\begin{document}
\title{Birational characterization of abelian varieties and ordinary abelian varieties in characteristic $p>0$}
\author{Christopher D. Hacon}
\address{Department of Mathematics \\
University of Utah\\
Salt Lake City, UT 84112, USA}
\email{hacon@math.utah.edu}
\author{Zsolt Patakfalvi}
\address{
EPFL\\
SB MATHGEOM CAG  \\
MA B3 444 (B\^atiment MA) \\
Station 8 \\
CH-1015 Lausanne}
\email{zsolt.patakfalvi@epfl.ch}
\author{Lei Zhang}
\address{School of Mathematics and Information Sciences \\
Shaanxi Normal University\\
Xi'an 710062, P. R. China}
\email{zhanglei2011@snnu.edu.cn}

 \thanks{The first author
 was supported by NSF research grants no: DMS-1300750, DMS-1265285 and by
  a grant from the Simons Foundation; Award Number: 256202. The thired author was supported by grant
NSFC (No. 11401358 and No. 11531009).}
\maketitle
\begin{abstract} Let $k$ be an algebraically closed field of characteristic $p>0$.  We give a birational characterization of ordinary abelian varieties over $k$: a smooth projective variety $X$ is birational to an ordinary abelian variety if and only if $\kappa_S(X)=0$  and $b_1(X)=2 \dim X$.
%
We also give a similar characterization of abelian varieties as well:  a smooth projective variety $X$ is birational to an abelian variety if and only if $\kappa(X)=0$, and the Albanese morphism $a: X \to A$ is generically finite.
Along the way, we also show that if $\kappa _S (X)=0$ (or if $\kappa(X)=0$ and $a$  is generically finite) then the Albanese morphism $a:X\to A$ is surjective and in particular $\dim A\leq \dim X$. \end{abstract}

Let $X$ be a smooth projective variety over an algebraically closed field $k$, then one hopes to classify $X$ 
via its birational invariants such as   the Betti numbers and the plurigenera $P_m(X)=h^0(\omega _X^{\otimes m})$. Classification results of this kind are quite rare, however over fields of characteristic $0$ there have been many striking results in the case of irregular varieties i.e. varieties with $q(X):=h^0(\Omega ^1_X)>0$. A famous classical example is the celebrated result of Kawamata in \cite{Kawamata81}, saying that a smooth projective variety $X$ over an algebraically closed field of characteristic $0$ is birational to an abelian variety if and only if $b_1(X)= 2 \dim X$ and  $\kappa(X)=0$ (or equivalently if $P_m(X)=1$ for all $m>0$ sufficiently divisible).

We show a positive characteristic version of this theorem for ordinary abelian varieties. Suppose from now on  that the base field $k$ is algebraically closed and of characteristic $p>0$. Recall that an abelian variety $A$ over $k$ is ordinary if and only if the action of the Frobenius on $H^1(A, \sO_A)$ is bijective. These are general abelian varieties in the sense that their locus is dense and open in the moduli space of abelian varieties. We prove the following.

\begin{theorem}
\label{thm:char_ord}
A smooth projective variety $X$ over an algebraically closed field of characteristic $p>0$ is birational to an ordinary abelian variety if and only if $\kappa_S(X)=0$ and $b_1(X) = 2 \dim X$.
\end{theorem}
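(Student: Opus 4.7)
The plan is to take the Albanese morphism $a: X \to A$ as the candidate birational model, and to show simultaneously that $a$ is birational and that $A$ is ordinary, adapting Kawamata's characteristic-zero proof but using Frobenius-stable pluricanonical sections. By the preliminary result announced in the abstract, the hypothesis $\kappa_S(X) = 0$ already forces $a$ to be surjective with $\dim A \le \dim X$. Combined with the identification $b_1(X) = 2 \dim \Pic^0_{\mathrm{red}}(X) = 2 \dim A$, valid in any characteristic, the assumption $b_1(X) = 2 \dim X$ then yields $\dim A = \dim X$, so $a$ is a generically finite surjective morphism between smooth projective varieties of the same dimension.

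Next I would establish that $A$ is ordinary. One cannot simply pull this back, since every abelian variety has $\kappa_S = 0$ regardless of ordinarity, so the hypothesis $\kappa_S(X) = 0$ must be deployed globally. The pullback $a^*\colon H^1(A, \mathcal{O}_A) \hookrightarrow H^1(X, \mathcal{O}_X)$ is Frobenius-equivariant, so $A$ is ordinary precisely when this image consists of Frobenius-bijective classes. I would argue, via Grothendieck duality for the absolute Frobenius $F$ and the Cartier operator $C\colon F_*\omega_X \to \omega_X$, that any nilpotent contribution of Frobenius on $H^1(A, \mathcal{O}_A)$ produces, after iterating $C$ and pushing forward along $a$, an unbounded family of Frobenius-stable pluricanonical sections on $X$, contradicting $\kappa_S(X) = 0$.

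With $A$ ordinary in hand, I would invoke the generic vanishing theorem available for ordinary abelian varieties. The Fourier--Mukai transform of $\mathbf{R}a_* \mathcal{O}_X$ (equivalently of $a_*\omega_X$ up to duality) is then well behaved, and combining this with the constraint imposed by $\kappa_S(X) = 0$ should force the generic degree of $a$ to be $1$, whence $a$ is birational. In particular $X$ is birational to the ordinary abelian variety $A$, as required.

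The main obstacle I anticipate is the ordinarity step: extracting usable information on the Frobenius action on $H^1(A, \mathcal{O}_A)$ from $\kappa_S(X) = 0$ through a morphism $a$ that may well be inseparable. This is exactly where Kawamata's Hodge-theoretic argument has no direct analogue, and where characteristic-$p$-specific tools (the Cartier operator, trace ideals of Frobenius, or Witt-vector duality) must do substantive new work.
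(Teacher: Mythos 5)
Your opening reduction (surjectivity of $a$ from $\kappa_S(X)=0$, then $b_1(X)=2\dim A$ forcing $\dim A=\dim X$ and generic finiteness) matches the paper. But the rest of the plan contains a factual error that sends you down an unnecessary and unsubstantiated path. You assert that ``every abelian variety has $\kappa_S=0$ regardless of ordinarity,'' and on that basis you try to prove ordinarity of $A$ directly, before birationality, by a Cartier-operator argument on $H^1(A,\mathcal{O}_A)$. This is false: an abelian variety $A$ satisfies $\kappa_S(A)=0$ \emph{if and only if} it is ordinary (for non-ordinary $A$ one has $\kappa_S(A)=-\infty$); this is \cite[2.3.2]{HP13} and is quoted in the paper's introduction. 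Consequently the paper's route is exactly the ``pull-back'' you dismiss: first prove $a$ is birational, then ordinarity of $A$ is immediate from the birational invariance of $\kappa_S$ (Corollary \ref{cor:ordinary}). Your proposed substitute --- that a nilpotent part of Frobenius on $H^1(A,\mathcal{O}_A)$ would generate ``an unbounded family of Frobenius-stable pluricanonical sections on $X$'' --- is not an argument: nilpotence of Frobenius destroys Frobenius-stable sections rather than producing them, and no mechanism is given for transporting classes in $H^1(A,\mathcal{O}_A)$ to pluricanonical sections of $X$.

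The second, and more serious, gap is that the step where all the difficulty lives is left as ``combining this with the constraint imposed by $\kappa_S(X)=0$ should force the generic degree of $a$ to be $1$.'' This is Theorem \ref{t-K}, and it is precisely where the paper's new content sits. It requires: (i) separability of $a$ (Proposition \ref{prop:Albanese_separable}), proved via Ekedahl's foliation quotients together with Igusa--Serre's injectivity of $H^0(A,\Omega_A)\to H^0(X,\Omega_X)$ --- this is the ingredient that removes the $p\nmid\deg a$ hypothesis of \cite{HP13}; (ii) the construction of the unipotent bundle $V_0$ generically surjecting onto $a_*\omega_X$ (Lemmas \ref{lem:support} and \ref{lem:big_open_set_surjective}) and its trivialization after an isogeny (Lemma \ref{l-1}, using Lange--Stuhler plus Frobenius to kill the nilpotent part of $H^1(B,\mathcal{O}_B)$ --- a step with no characteristic-zero analogue); and (iii) the verification that the base change of $X$ along the inseparable part of that isogeny stays integral, which is exactly where (i) is used. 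The inseparability problem you flag as the obstacle to your (unneeded) ordinarity step is in fact the obstacle here, and your proposal does not engage with it. As written, the argument does not close.
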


Here $\kappa_S(X)$ denotes the Frobenius stable Kodaira dimension (the growth rate of the dimension of the Frobenius stable pluri-canonical forms, see \cite[Section 4.1]{HP13} for the precise definition). It is easy to see that an abelian variety $A$ is ordinary if and only if $\kappa _S(A)=0$ \cite[2.3.2]{HP13}.  We have that $b_1(X):=\dim_{\bQ_l}H^1_{\textrm{\'et}}(X, \bQ_l)$, which is known to be equal to $\dim A$ \cite[page 14]{L09}.

As usual in characteristic $p>0$, the main difficulties in proving Theorem \ref{thm:char_ord} correspond to the case when the Albanese morphism $a: X \to A$ is wildly ramified or inseparable (see examples of the latter in \cite[8.7]{L09}). In fact, the first two authors proved the above statement excluding roughly these cases in \cite[Thm 1.1.1]{HP13} (to be precise the condition was that $p \nmid \deg a$). So, the main novelty of Theorem \ref{thm:char_ord} is the removal of this additional  assumption on $a$.

We are also able to prove the following result towards the birational classification of non-ordinary abelian varieties.

\begin{theorem}
\label{thm:char_ab}
A smooth projective variety $X$ over an algebraically closed field of characteristic $p>0$ is birational to an abelian variety if and only if $\kappa(X)=0$ and the Albanese morphism $a :X \to A$ is generically finite.
\end{theorem}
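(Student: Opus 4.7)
The ``only if'' direction is immediate: if $X$ is birational to an abelian variety $A'$, then $\kappa(X)=\kappa(A')=0$ since plurigenera are birational invariants, and the universal property of the Albanese identifies $a:X\to A$ with the given birational map $X\dashrightarrow A'$ (composed with an isomorphism of abelian varieties), which is in particular generically finite.

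For the ``if'' direction, assume $\kappa(X)=0$ and $a$ generically finite, and proceed in two steps. \emph{Step 1 (surjectivity).} Invoke the ``along the way'' claim from the abstract to conclude that $a$ is surjective, so $\dim X=\dim A$. The proof of surjectivity should go through Fourier--Mukai analysis of the pushforwards $a_{*}\omega_X^{\otimes m}$: were $a(X)\subsetneq A$, one could translate along $A$ and combine positivity of pushforwards with the positive-characteristic generic vanishing of \cite{HP13} to produce strictly more than one pluricanonical section on $X$, contradicting $h^0(\omega_X^{\otimes m})\leq 1$.

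\emph{Step 2 (birationality).} Next, upgrade ``generically finite and surjective'' to ``birational'' by passing to the Stein factorization $X\to Y\xrightarrow{b}A$: the map $X\to Y$ is birational and $b$ is finite of the same generic degree as $a$. Since $Y$ is normal and any pointed connected \'etale cover of $A$ is again an abelian variety, it suffices to show $b$ is \'etale. In the separable case, pick a resolution $\pi:\widetilde Y\to Y$; since $K_A=0$, $K_{\widetilde Y}$ decomposes as the sum of the ramification divisor $R$ of $b\circ\pi$ and $\pi$-exceptional divisors. The hypothesis $\kappa(\widetilde Y)=\kappa(X)=0$ together with a Chen--Hacon-style analysis of the cohomological support locus $V^0(\omega_X)=\{L\in\Pic^0(A):h^0(X,\omega_X\otimes a^*L)>0\}$ --- using that $a_{*}\omega_X$ is a GV-sheaf on $A$ by the positive-characteristic generic vanishing of \cite{HP13} --- should force $a_{*}\omega_X$ to have generic rank one, so $R=0$ and $b$ is \'etale. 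Hence $Y$ is an abelian variety birational to $X$.

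The main obstacle is the inseparable case, where no honest ramification divisor is available. My plan is to split $a$ as $X\xrightarrow{\iota}Z\xrightarrow{a_{\mathrm{sep}}}A$ with $\iota$ purely inseparable and $a_{\mathrm{sep}}$ separable, then apply iterated Frobenius descent --- keeping track of $\kappa$ through the Frobenius-stable machinery of \cite{HP13}, since purely inseparable maps do not preserve plurigenera in general --- in order to reduce to the separable setting handled in Step 2. The delicate bookkeeping needed to ensure that $\kappa=0$ and the generic finiteness of the Albanese propagate through these Frobenius twists is the most technical point of the argument.
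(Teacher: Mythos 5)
Your skeleton (surjectivity, then upgrading a generically finite surjective Albanese map to a birational one) matches the paper's, but two of your key steps have genuine gaps. First, in Step 2 you assert that $a_*\omega_X$ is a GV-sheaf by the positive-characteristic generic vanishing of \cite{HP13} and that a Chen--Hacon-style analysis of $V^0(\omega_X)$ ``should force'' $a_*\omega_X$ to have generic rank one. This is precisely the point where the characteristic-$0$ argument breaks: in \cite{HP13} generic vanishing holds not for $a_*\omega_X$ but only for the Cartier module limit $\Omega=\varprojlim F^e_*a_*\omega_X$, and the cohomological support locus controls only the image sheaf $\Lambda_0'$ of the Fourier--Mukai transform. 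The paper's actual mechanism is different: it shows (Lemma \ref{lem:support}) that $\Supp\Lambda_0'$ is a single torsion point, so $V_0=((-1_A)^*D_ARS(\Lambda_0'))[-g]$ is a homogeneous (twisted unipotent) vector bundle mapping generically onto $a_*\omega_X$ (Lemma \ref{lem:big_open_set_surjective}); one then kills the extensions by pulling back along an isogeny $\tau:B\to A$ (Lemma \ref{l-1} --- a genuinely characteristic-$p$ step), and the bound $h^0(\nu^*\omega_W)\le 1$ on the pullback forces the image of the trivialized $\tau^*V_0$ to have rank one, hence $\deg a=1$. Your ramification-divisor argument ($K_{\widetilde Y}=R+\text{exc}$, $\kappa=0\Rightarrow R=0$) also does not close on its own: an effective divisor with $\kappa=0$ need not vanish, so you still need the rank-one statement you were trying to derive.

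Second, your plan for the inseparable case (factor $a$ through a purely inseparable map and run iterated Frobenius descent while tracking $\kappa$) is left entirely open, and it is exactly where the paper's main novelty lies --- but the paper resolves it in a completely different way: Proposition \ref{prop:Albanese_separable} shows the inseparable case \emph{never occurs}. Under $\kappa(K_X)\le 0$ and $a$ generically finite, a minimal-degree counterexample is quotiented by the foliation $\sF=\ker(\sT_X\to a^*\sT_A)$; Ekedahl's canonical bundle formula $K_X+(1-p)\det\sF=f^*K_Y$ combined with Serre--Igusa's injectivity of $H^0(A,\Omega_A)\to H^0(X,\Omega_X)$ gives $\kappa(\det\sG)>0$ and hence a contradiction with $\kappa(K_X)\le 0$. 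By contrast, your Frobenius-descent route must confront the fact that $\kappa$ is not controlled under purely inseparable covers, and you give no concrete mechanism for this; as written, the inseparable case --- the heart of the theorem --- remains unproved. The surjectivity step you invoke is indeed Theorem \ref{t-surj} of the paper, but note its proof is again the Cartier-module/Fourier--Mukai argument ($\Supp\Omega=A$ because the $V_e$ are homogeneous bundles with surjective transition maps), not the translation-plus-positivity sketch you describe.
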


The main reason for the appearance of the generically finite assumption on $a$, as opposed to the Betti number assumption of Theorem \ref{thm:char_ord}, is that when $\kappa(X)=0$ but $\kappa_S(X) = - \infty$ we are unable to construct a useful  non nilpotent Cartier module on $A$ given by the geometry of $a : X \to A$. We do not know if it is possible to strengthen Theorem \ref{thm:char_ab} to a cohomological statement similar to Theorem \ref{thm:char_ord}.

Although in proving the above results we use very different methods than those in \cite{Kawamata81}, the initial reduction is the same. That is,  Kawamata in  \cite{Kawamata81} shows first that the Albanese map $a:X\to A$ is surjective (and in particular $\dim A\leq \dim X$), and then he proceeds by showing that if $\dim A=\dim X$ holds as well then in fact $a : X\to A$ is birational.
We follow the same structure, and hence the precise theorems shown in this article are as follows (see Theorem \ref{t-surj},   Theorem \ref{t-K} and Corollary \ref{cor:ordinary} for the actual proofs):

\begin{theorem}
\label{thm:precise} Let $X$ be a smooth projective variety defined over an algebraically closed field of characteristic $p>0$ and $a:X\to A$ its Albanese morphism, then
\begin{enumerate}
\item
If $\kappa (X)=0$ and the Albanese morphism is generically finite, then $a$ is surjective
(and in particular $\dim A\leq \dim X$).
\item
If $\kappa_S(X)=0$ then $a$ is surjective.
\item If $\kappa (X)=0$,  $\dim A=\dim X$,  and $a$ is generically finite
then $a:X\to A$ is birational.
\end{enumerate}
\end{theorem}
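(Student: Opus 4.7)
\emph{Overall strategy.} The plan follows the Ueno--Kawamata philosophy, adapted to characteristic $p$ via the Cartier-module machinery of \cite{HP13}: first rule out $a(X) \subsetneq A$ to obtain surjectivity (parts (1) and (2)); then, once $\dim X = \dim A$, combine generic vanishing with $\kappa(X)=0$ to force $\deg(a)=1$ (part (3)).

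\emph{Surjectivity (parts (1) and (2)).} Set $Z := a(X)$ and assume $Z \subsetneq A$. The universal property of the Albanese forces $Z$ to generate $A$ as an algebraic group, so the connected stabilizer $\operatorname{Stab}_A^0(Z)$ has dimension strictly less than $\dim Z$; in particular $Z$ is not a translate of a proper abelian subvariety. For (1), generic finiteness gives $\dim Z = \dim X$ and $\kappa(X) \geq \kappa(\widetilde Z)$ for any resolution $\widetilde Z \to Z$, and we finish by invoking a positive-characteristic Ueno-type theorem asserting that such a subvariety of an abelian variety has positive Kodaira dimension, contradicting $\kappa(X) = 0$. For (2), generic finiteness is unavailable, so we must instead extract positivity directly from $\kappa_S(X) = 0$: construct a non-nilpotent Cartier module on $X$ from the Frobenius-stable pluricanonical system, push it forward to a non-nilpotent Cartier module on $A$ with support in $Z$, and invoke the generic vanishing theorem for non-nilpotent Cartier modules on abelian varieties of \cite{HP13} to conclude that such a sheaf cannot be supported in a proper subvariety of $A$, contradiction.

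\emph{Birationality (part (3)).} Assume $a$ is generically finite surjective with $\dim X = \dim A$, and set $d := \deg(a)$. Fix $m > 0$ with $P_m(X) = 1$ (and hence $P_{2m}(X) = 1$) and form $\mathcal{F} := a_*\omega_X^m$, a torsion-free sheaf of rank $d$ on $A$; iterated Frobenius pushforwards equip $\mathcal{F}$ with a Cartier-module structure. The projection formula gives $h^0(A, \mathcal{F} \otimes L) = h^0(X, \omega_X^m \otimes a^*L)$ for $L \in \Pic^0(A)$. If $d \geq 2$, Cartier-module generic vanishing forces $V^0(\mathcal{F})$ to contain a positive-dimensional subset of $\Pic^0(A)$, and a multiplication-of-sections argument then applies: for nonzero $s \in H^0(\omega_X^m \otimes a^*L)$ and $t \in H^0(\omega_X^m \otimes a^*L^{-1})$ the product $s \cdot t \in H^0(\omega_X^{2m})$ must be a scalar multiple of the unique section $\sigma$, so $\operatorname{div}(s) \leq \operatorname{div}(\sigma)$; this leaves only finitely many possibilities for $\operatorname{div}(s)$ as $L$ varies continuously in $V^0(\mathcal{F}) \cap (-V^0(\mathcal{F}))$, a contradiction. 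Hence $d = 1$ and $a$ is birational.

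\emph{Main obstacle.} The crux throughout is the wild ramification or inseparability of $a$, which invalidates both the classical trace splitting of $a_*\omega_X$ and the classical Green--Lazarsfeld generic vanishing; both (2) and (3) therefore rely essentially on the Cartier-module generic vanishing of \cite{HP13} rather than on pluricanonical sheaves in isolation. Of the three parts, (2) is the most delicate, because one must assemble a non-nilpotent Cartier module from the subtle hypothesis $\kappa_S(X) = 0$ without the crutch of a generically finite Albanese; in (3), the analogous difficulty is tracking the inseparable contribution to $a_*\omega_X^m$ when controlling the rank of $\mathcal{F}$.
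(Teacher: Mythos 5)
Your proposal reproduces the broad Kawamata-style architecture of the paper (surjectivity first, then degree one), but each of the three parts leans on a step that fails in characteristic $p$, and in each case the failure occurs exactly at the point the paper is designed to handle. In part (1), the inequality $\kappa(X)\geq\kappa(\widetilde Z)$ for the generically finite map $X\to Z=a(X)$ is false when the map is inseparable: the foliation canonical bundle formula (cf.\ Proposition \ref{prop:Albanese_separable}, where $f^*K_Y=pK_X+(1-p)\det\sG$ is used) shows the target can have strictly smaller, even negative, Kodaira dimension; moreover a resolution $\widetilde Z$ and a characteristic-$p$ Ueno-type theorem are not available in general. The paper does not argue via $Z$ at all; it proves (1) by the same Cartier-module argument as (2). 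In part (3), the claim that $d\geq 2$ forces $V^0(\mathcal F)$ to be positive-dimensional is unsupported and false as stated: $\mathcal{O}_A^{\oplus 2}$ with its natural Cartier structure has rank $2$ and $V^0=\{0\}$, and the Euler-characteristic arguments that drive the analogous char-$0$ step (Chen--Hacon) require Koll\'ar vanishing for $R^ia_*\omega_X$, which is unavailable here. The paper instead first proves that $a$ is \emph{separable} (Proposition \ref{prop:Albanese_separable}, via Ekedahl's foliations and Serre's injectivity of $H^0(A,\Omega_A)\to H^0(X,\Omega_X)$), then pulls back along an isogeny trivializing the homogeneous bundle $V_0$ and uses that a trivial bundle generically surjecting onto $b_*\omega_W$ with $h^0\leq 1$ must have rank-one image. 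Separability is not a cosmetic point: it is what keeps the base-changed variety integral, and your proposal never addresses it.

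In part (2), the assertion that a non-nilpotent Cartier module on $A$ ``cannot be supported in a proper subvariety'' is not a theorem of \cite{HP13} and is false: the structure sheaf of a proper abelian subvariety, pushed forward to $A$, is a non-nilpotent Cartier module with proper support. What actually carries the argument is Lemma \ref{lem:support}: the support of the coherent piece $\Lambda_0'$ of the Fourier--Mukai transform is stable under multiplication by $p$ on $\hat A$, hence (Lemma \ref{lem:p_closed}) its top-dimensional components lie in torsion translates of abelian subvarieties; if that support were positive-dimensional one could choose infinitely many $m$-tuples $(P_1,\dots,P_m)$ in it with $P_1+\dots+P_m=0$ and multiply sections of $H^0(rK_X-P_i)$ to produce infinitely many elements of $|mrK_X|$, contradicting $\kappa(X)=0$. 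This pins $\Supp\Lambda_0'$ to a single torsion point, so $V_0$ is a homogeneous vector bundle with full support, and $\Omega=\varprojlim V_e$ with surjective transition maps therefore has $\Supp\Omega=A$; since each $\Omega_e$ is supported on $a(X)$, surjectivity follows. Your sketch skips precisely this interplay between the $p$-closedness of the support and the hypothesis $\kappa(X)=0$ (resp.\ $\kappa_S(X)=0$), which is the actual content of the proof.
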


Note that Theorem \ref{thm:precise}, with the additional statement of Corollary \ref{cor:ordinary}, indeed implies  Theorem \ref{thm:char_ord} and  Theorem \ref{thm:char_ab}.
\subsection{Outline of the proof} In order to prove these result, we use several refinements of the generic vanishing techniques developed in \cite{HP13}.

First, we sketch the ideas going into the surjectivity of $a$  for the case $\kappa_S(X)=0$ (the $\kappa(X)=0$ case is very similar).
The main idea is to consider the morphism $F_*a_*\omega _X\to a_*\omega _X$ induced by the trace of the Frobenius on $X$.
This induces an inverse system $F^{e+1}_*a_*\omega _X\to F^e_*a_*\omega _X$ that satisfies the Mittag Leffler condition.
We let $\Omega =\varprojlim F^e_*a_*\omega _X$.
If $\kappa_S (X)=0$ (and assuming for simplicity that $h^0(K_X)\ne 0$), then in particular one has that $H^0(\Omega )\ne 0$ and so $\Omega \ne 0$.
We then study the  Fourier-Mukai transform of the dual direct limit $\Lambda =\varinjlim R\hat S D_A(F^e_*a_*\omega _X)$.
By \cite{HP13}, it follows,  that $\Lambda$ is a quasi-coherent sheaf on $\hat A$ such that $(-1_A)^*D_A(RS(\Lambda ))[-g]=\Omega$. It turns out that the support of $\Lambda _0'$, the coherent sheaf on $\hat A$ given by the image of $\mathcal H ^0(R\hat S D_A(a_*\omega _X))\to \Lambda$,  is contained in the cohomological support locus $V^0(K_X)=\{P\in \hat A|h^0(K_X-P)\ne 0\}$ and is invariant via multiplication by $p$ (on $\hat A$).  It then follows that
in fact the support of $\Lambda '_0$ equals to $\{\mathcal O _A\}\subset \hat A$ (Lemma \ref{lem:support}). But then the Fourier-Mukai transform $RS(D_A(\Lambda '_0))$ is a
unipotent vector bundle which we denote by $V_0$. This means that $V_0$ is obtained by successive non-split extensions of $\mathcal O _A$. 
Since $\Omega =\varprojlim F^e_* V_0$  and  $F^e_*V_0 \to F^{e-1}_* V_0$ is surjective, it furthermore follows that $\Supp \Omega = A$, and hence $a$ has to be surjective.

By the surjectivity part, in either case we may assume that $\kappa(X)=0$, and $a$ is surjective and generically finite. Then, under these assumptions we show that $a$ is birational. Similarly as above, for simplicity we assume that $H^0(X,\omega_X) \neq 0$.
Replacing $A$ and $X$ by appropriate covers, we may assume that $V_0=\oplus \mathcal O _A$ (see Lemma \ref{l-1}). Note that this is a phenomenon that fails in characteristic $0$, but  holds in characteristic $p>0$ because in the latter case one can always kill cohomology by passing to finite covers, which can be chosen to be an  isogeny in the present case. Since $V_0\to a_*\omega _X$ is generically surjective and $h^0(a_*\omega _X)=1$, it follows easily that the image of  $V_0\to a_*\omega _X$ has generic rank 1 and hence the generic rank of $a_* \omega _X$ is 1.  Thus $a$ has generic degree 1 and hence is birational. The main subtlety in this argument is that when $X$ is replaced by the pullback via the above isogeny, it has to stay integral. This would not be clear if $a$ was not separable. However, luckily a result of Igusa (stated explicitly in Serre's article \cite{S58}) can be used to show that $a$ is separable if it is surjective, generically finite, and  $\kappa(X)=0$ (Proposition \ref{prop:Albanese_separable}).

\begin{acknowledgement}
Part of this work was done while the third author visited Xiamen University and the Institute for Mathematical Sciences of the National University of Singapore. He would like to thank Prof. Wenfei Liu and De-Qi Zhang for their hospitality and support.
\end{acknowledgement}

\section{Preliminaries}
Throughout this paper we work over an algebraically closed field $k$ of characteristic $p>0$.
We use the notation established in \cite{HP13}. In particular ordinary abelian varieties and their properties are discussed in \cite[\S 2.3]{HP13}, the Fourier-Mukai functor is  discussed in \cite[\S 2.4]{HP13}, the Frobenius stable Kodaira dimension $\kappa _S(X)$ and its relation to the usual Kodaira dimension $\kappa (X)$ is  discussed in \cite[\S 4.1]{HP13}.
\subsection{$p$-closed subsets}
\begin{lemma}
\label{lem:p_closed}
Let $V\subset A$ be a closed subset of an abelian variety
such that $pV :=\{ pv | v \in V \} \subseteq  V$.
If $V'$ is an irreducible component of $V$ of maximal dimension and  $A'$ is  the  abelian subvariety of $A$ spanned by a translate through the origin of $V'$, then there is an integer $m>0$ and a
closed point $\alpha \in A$ such that
\begin{enumerate}
\item $m\alpha =0$, and
\item $V'\subset \alpha +A'$.\end{enumerate}
\end{lemma}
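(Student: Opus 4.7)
The plan is to exploit that multiplication by $p$ is a finite surjective endomorphism of $A$, so it preserves dimension of irreducible closed subsets. Let $V'_1,\dots,V'_r$ be the irreducible components of $V$ of maximal dimension. For each $i$, $pV'_i$ is an irreducible closed subset of $V$ of the same dimension as $V'_i$, hence of maximal dimension, and by irreducibility it must coincide with some $V'_{\sigma(i)}$. This defines a map $\sigma \colon \{1,\dots,r\} \to \{1,\dots,r\}$, and since the target is finite, $\sigma$ is eventually periodic. Applied to our chosen $V'$, this gives an integer $n>0$ with $p^n V' = V'$.

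Set $q := p^n$, so $qV' = V'$. Pick any $v_0 \in V'$ and let $W := V' - v_0 \subset A$, a closed irreducible subvariety containing the origin; by definition $A'$ is the abelian subvariety of $A$ generated by $W$, and in particular $W \subseteq A'$, so $V' \subseteq v_0 + A'$. I would then pass to the quotient $\pi \colon A \to A/A'$. The image $\pi(V')$ is a single point $\bar v_0 = \pi(v_0)$, since $W \subset A'$. Applying $\pi$ to the identity $qV' = V'$ yields $q\bar v_0 = \bar v_0$, i.e.\ $(q-1)\bar v_0 = 0$ in $A/A'$. Equivalently, $(q-1)v_0 \in A'$.

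Finally, I would use that multiplication by $q-1$ is an isogeny of $A'$ (hence surjective), to pick $a' \in A'$ with $(q-1)a' = -(q-1)v_0$. Setting $\alpha := v_0 + a'$ yields $\alpha \in v_0 + A'$, so $V' \subseteq v_0 + A' = \alpha + A'$, and $(q-1)\alpha = (q-1)v_0 + (q-1)a' = 0$, so taking $m = q - 1 = p^n - 1$ gives $m\alpha = 0$.

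The only nontrivial point is step one, the periodicity of $\sigma$: one must check that $pV'_i$, being irreducible closed of the same dimension as the maximal-dimensional components and contained in $V$, actually equals one of the $V'_j$ (so $\sigma$ is well defined as a self-map of a finite set). The rest is a direct computation in $A/A'$ combined with the surjectivity of $[q-1]$ on $A'$, which should present no real obstacle.
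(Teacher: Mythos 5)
Your overall strategy is the paper's: show that the forward orbit of $V'$ under multiplication by $p$ repeats, deduce that some multiple $mP$ of a point $P\in V'$ lands in $A'$, and then use divisibility of $A'$ to translate $P$ to a torsion point $\alpha$ modulo $A'$. However, there is one genuine error in the first step. From the fact that $\sigma$ is a self-map of a finite set you may only conclude \emph{eventual} periodicity, i.e.\ $p^jV'=p^kV'$ for some $j>k\geq 1$; you cannot conclude $p^nV'=V'$, because your chosen $V'$ may lie on the pre-periodic tail of the orbit rather than on the cycle. A concrete counterexample to your claim: take $V=\{0,t\}$ with $t$ a nonzero $p$-torsion point. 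Then $pV=\{0\}\subseteq V$, both components have maximal dimension $0$, and for $V'=\{t\}$ one has $p^nV'=\{0\}\neq V'$ for every $n>0$. (Note that $\sigma$ need not be injective: distinct components can have the same image under $[p]$, e.g.\ two components differing by a $p$-torsion translation.)

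The good news is that the weaker, correct statement $p^jV'=p^kV'$ is all you need, and with it your computation goes through verbatim: projecting to $A/A'$ gives $p^j\bar v_0=p^k\bar v_0$, hence $(p^j-p^k)v_0\in A'$, and surjectivity of $[p^j-p^k]$ on $A'$ produces $a'\in A'$ with $\alpha:=v_0+a'$ satisfying $(p^j-p^k)\alpha=0$ and $V'\subseteq\alpha+A'$; so take $m=p^j-p^k$ instead of $p^n-1$. This repaired argument is essentially identical to the one in the paper, which works with the inclusion $p^jP\in p^kP+A'$ directly rather than passing to the quotient $A/A'$ — a cosmetic difference only. One further small point: the lemma speaks of ``the'' abelian subvariety spanned by ``a'' translate of $V'$ through the origin, so for the statement to be well posed one should check $A'$ is independent of the chosen base point $v_0$; the paper verifies this at the outset, while you implicitly fix one $v_0$ throughout. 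Your argument does not actually need the independence, but it is worth recording.
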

\begin{proof} By definition $A'$ is the smallest abelian subvariety of $A$ containing a translate through the origin of $V'$.
Note that $A'$ does not depend on which translate of $V'$ through the origin we are considering. Indeed, if $P,Q \in V'$, and $A_P$ and $A_Q$ are the abelian subvarieties spanned by $V'-P$ and $V'-Q$, respectively, then $Q-P \in V'-P \subseteq A_P$. Therefore,  $V'-Q = V'-P- (Q-P) \subseteq A_P - (Q-P) = A_P$, which then implies that $A_Q \subseteq A_P$. By symmetry, $A_P \subseteq A_Q$ also holds and this implies that $A_P = A_Q$.

Since multiplication by $p$ is a finite morphism, $pV'$ is also a component of $V$ of maximal dimension.
Furthermore, since $V$ has finitely many components, we have $p^kV'=p^jV'$ for some integers $j>k>0$. Therefore, for any $P \in V'$,
\begin{equation*}
p^jP\in p^jV'=p^kV'\subset p^kP+A',
\end{equation*}
and hence $(p^j-p^k)P \in A'$. Since $A'$ is $p^j-p^k$ divisible, we may find $Q'\in A'$ such that
$(p^j-p^k)(P+Q')=0$. Let $\alpha :=P+Q'$,  then $V'\subset P+A'=\alpha +A'$.\end{proof}

\subsection{Homogeneous vector bundles}

\begin{definition}
\label{def:unipotent}
A vector bundle $V$ on an abelian variety $A$ is {\bf homogeneous} if it admits a filtration $V=F_r\supset F_{r-1}\supset \ldots \supset F_1\supset F_0=0$ such that $F_i/F_{i+1}\cong P_i\in {\rm Pic}^0(A)$ for $1\leq i\leq r={\rm rk}(V)$. $V$ is {\bf unipotent}, if $P_i \cong \sO_A$ for all $i$.
\end{definition}
Note that if $P\in {\rm Pic}^0(A)$ corresponds to the point $y\in \hat A$, then the Fourier-Mukai transform of $P$ is $R\hat S(P)=R^g\hat S (P)[-g]=k_{-y}[-g]$ where $g=\dim A$, $k_{-y}$ denotes the skyscraper sheaf supported at $-y\in \hat A$ and $[-g]$ denotes shifting a complex $g$ spaces to the right. It is then easy to see that the Fourier-Mukai transform induces an equivalence between the category of homogeneous vector bundles on $A$ and that of Artinian $\mathcal O _{\hat A}$-modules.

\begin{lemma}\label{l-1} Let $V$ be a homogeneous vector bundle on $A$ such that all the factors are isomorphic to  $P \in {\rm Pic}^0(A)$. Then there exists an isogeny $\alpha :A'\to A$ such that $\alpha ^*V= \alpha^* P^{\oplus \rk V}$. Furthermore, if $A$ is ordinary, then $\alpha$ can be chosen to be \'etale.
\end{lemma}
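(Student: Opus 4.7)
The plan is to first reduce to the case of unipotent bundles by twisting, and then trivialize the filtration layer-by-layer by exploiting that pullback along $[p]:A\to A$ (or, in the ordinary case, along Verschiebung) annihilates $H^1(A,\sO_A)$.

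First I would twist by $P^{-1}$: setting $W := V\otimes P^{-1}$, the factors of $W$ are all $\sO_A$, and it suffices to find an isogeny $\alpha:A'\to A$ (étale if $A$ is ordinary) with $\alpha^* W \cong \sO_{A'}^{\oplus r}$, where $r=\rk V$, since then $\alpha^* V = \alpha^* W \otimes \alpha^* P \cong (\alpha^* P)^{\oplus r}$. The key input is the behaviour of pullback on $H^1(\sO)$: for any isogeny $\phi:B\to C$, via the identification $H^1(C,\sO_C)=T_0\hat{C}$, the map $\phi^*$ is the tangent map of the dual isogeny $\hat\phi$. Consequently $[p]^*$ is multiplication by $p$ on $H^1(\sO)$, hence zero in characteristic $p$. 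In the ordinary case $[p]$ is not étale, so I would instead use the Verschiebung $\mathrm{Ver}:A^{(p)}\to A$, which is étale (a standard characterization of ordinariness); from $[p]=\mathrm{Ver}\circ F$ and the bijectivity of $F^*$ on $H^1(\sO)$ for ordinary $A$, one concludes $\mathrm{Ver}^* = 0$ on $H^1(\sO)$ as well.

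I would then induct on $r$. The case $r=1$ is trivial. For $r\geq 2$, fix an extension $0\to W'\to W\to \sO_A\to 0$ with $W'$ unipotent of rank $r-1$, and apply the inductive hypothesis to obtain $\beta:A_1\to A$ (étale if $A$ is ordinary, in which case $A_1$ is ordinary as well) with $\beta^* W' \cong \sO_{A_1}^{\oplus r-1}$. The pulled-back sequence $0\to \sO_{A_1}^{\oplus r-1}\to \beta^* W\to \sO_{A_1}\to 0$ is then classified by $r-1$ elements of $H^1(A_1,\sO_{A_1})$, all killed by a further pullback along $[p]:A_1\to A_1$ (or, in the ordinary case, along $\mathrm{Ver}:A_1^{(p)}\to A_1$). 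This trivializes the bundle, and the composition gives the desired $\alpha$.

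The main point requiring care is the ordinary case, where $[p]$ must be systematically replaced by Verschiebung in order to stay in the étale category; the induction nonetheless goes through, because étale covers of ordinary abelian varieties remain ordinary and compositions of étale isogenies are étale.
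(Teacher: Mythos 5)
Your proposal is correct, and it shares the paper's inductive skeleton (trivialize the filtration one layer at a time by pulling back along an isogeny that kills the relevant extension classes in $H^1(\sO)$), but it produces that isogeny by a genuinely different and more elementary route. The paper decomposes $H^1(B,\sO_B)$ into its Frobenius-semistable and nilpotent parts via Cartier's lemma, kills the nilpotent part by iterating Frobenius, and kills the semistable part by applying Lange--Stuhler to the Frobenius-fixed rank-$2$ extensions to get \'etale covers; the fiber product of all of these is the required $\tau_B$. You instead observe that $[p]^*$ is multiplication by $p$, hence zero, on $H^1(B,\sO_B)=T_0\hat B$, and in the ordinary case replace $[p]$ by the Verschiebung $\mathrm{Ver}:B^{(p)}\to B$, which is \'etale exactly when $B$ is ordinary and satisfies $\mathrm{Ver}^*=0$ on $H^1(\sO)$ because $0=[p]^*=F^*\circ\mathrm{Ver}^*$ with $F^*$ injective by ordinarity. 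All the inputs you use (the differential of $[n]$ is multiplication by $n$, \'etaleness of Verschiebung characterizing ordinarity, isogeny-invariance of ordinarity for the induction) are standard, and your preliminary twist by $P^{-1}$ is an equivalent reformulation of the paper's direct computation of $\Ext^1(\alpha^*P,\alpha^*P^{\oplus i})\cong H^1(\sO)^{\oplus i}$. Your argument is shorter and avoids Lange--Stuhler entirely; what the paper's argument buys in exchange is the finer information that the whole semistable part of $H^1(B,\sO_B)$ can be killed by an \'etale cover even when $B$ is not ordinary, with only the nilpotent part forcing an inseparable (Frobenius) factor--- information not needed for the statement of the lemma but illustrative of where inseparability enters.
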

\begin{proof}
Let $0 = F_0 \subseteq F_1 \subseteq \dots \subseteq F_{\rk V}$ be the filtration given by Definition \ref{def:unipotent}.

\emph{We claim that it is enough to show that for any abelian variety $B$, there is an isogeny (resp. separable isogeny in the ordinary case)  $\tau_B : B' \to B$, such that $\tau^* : H^1(B,\sO_B) \to H^1(B',\sO_{B'})$ is the zero map.} Indeed,  we show by induction on $i$ that there is an isogeny (resp. separable isogeny) $\alpha_i : A_i \to A$, such that $\alpha_i^* F_i$ splits, that is, $\alpha_i^* F_i \cong \alpha_i^* P^{\oplus i}$. This is a tautology for $i=1$. Then, for the induction step, we want to show that for $\alpha_{i+1} :=  \alpha_i \circ \tau_{A_i}$, the following exact sequence splits:
\begin{equation*}
\xymatrix{
0 \ar[r] & \alpha_{i+1}^* F_i \ar[r] & \alpha_{i+1}^* F_{i+1} \ar[r] & \alpha_{i+1}^* F_{i+1}/ \alpha_{i+1}^*F_i \ar[r] & 0.
}
\end{equation*}
Note that in the ordinary case, if $\alpha_i$ was separable, then by the above definition, so is $\alpha_{i+1}$. In any case, the splitting of the above exact sequence, is determined by an element of
\begin{multline*}
\Ext^1_{A_{i+1}}( \alpha_{i+1}^* F_{i+1}/ \alpha_{i+1}^*F_i, \alpha_{i+1}^* F_i)
%
%
\\ \cong \underbrace{H^1(A_{i+1}, \alpha_{i+1}^* (P^\vee \otimes P^{\oplus i})) }_{\tau_{A_i}^* F_i \cong \tau_{A_i}^* (\alpha_i^* P^{\oplus i}) \textrm{ by the induction hypothesis}}
\cong H^1(A_{i+1}, \sO_{A_{i+1}})^{\oplus i}
\end{multline*}
 coming from an element of $\Ext^1_{A_{i}}( \alpha_{i}^* F_{i+1}/ \alpha_{i}^*F_i, \alpha_{i}^* F_i) \cong H^1(A_{i}, \sO_{A_{i}})^{\oplus i}$ via $\tau_{A_i}^*$. By the choice of $\tau_{A_i}$ this element then has to be zero, which concludes the proof of the induction step, and hence also of the claim itself.

We are left to show that one can find $\tau_B$ as above. Note that the action of the absolute Frobenius $F$ on $H^1(B, \sO_B)$ is $p$-linear. Hence by \cite[III Lemma 3.3]{C98} $H^1(B, \sO_B)$ splits into a semi-stable part $H^1(B, \sO_B)_{\sstab}$ and a nilpotent part $H^1(B, \sO_B)_{\nilp}$, and furthermore, $H^1(B, \sO_B)_{\sstab}$ is generated by vectors $v_1,\dots,v_r$ stable under the action of $F$.

Each $v_i$ corresponds to a rank $2$-vector bundle of the form
\begin{equation*}
\xymatrix{
0 \ar[r] & \sO_B \ar[r] & E_i \ar[r] & \sO_B \ar[r] & 0
},
\end{equation*}
for which $F^* E_i \cong E_i$, since $F^* (v_i)=v_i$. Then, by \cite{LS77} there is an \'etale cover $\rho_i : B_i \to B$, such that $\rho_i^* E_i \cong \sO_{B_i}^{\oplus 2}$. This then implies that in fact the above exact sequence becomes split, that is $\rho_i^*(v_i)=0$. Choose then $\tau_{B} : B' \to B$ to be the fiber product of $F^{\rk H^1(B, \sO_B)_{\nilp}} : B \to B$ and  the morphisms $\rho _i:B_i\to B$. The morphism $\tau_{B}$ satisfies our requirements. Note that in the ordinary case, $\rk H^1(B, \sO_B)_{\nilp}=0$, so $B'\to B$ is \'etale.
\end{proof}

\subsection{Inseparability of the Albanese morphism}

In the following proposition we use the notions of $\kappa(D) = - \infty$, $\kappa(D) = 0$ and $\kappa(D) >0$ for a Weil divisor $D$ on a normal variety $X$ (and $\kappa(D) \leq 0$ and $\kappa(D) \geq 0$, which are just obvious combinations of these). We define these three notions respectively as:
\begin{itemize}
 \item $\kappa(D)=-\infty$ if $H^0(X, mD) =0$ for all $m>0$,
 \item $\kappa(D)=0 $ if ${\rm max}\{h^0(X, mD)\  |\ m >0\}=1$, and
 \item $\kappa(D)>0$ if $h^0(X, mD) >1$ for some $m>0$.
\end{itemize}
Also, by $\det$ we mean the determinant as a (linear equivalence class of) divisor(s).

\begin{proposition}
\label{prop:Albanese_separable}
If $X$ is a normal projective variety with $\kappa(K_X) \leq 0$ and the Albanese morphism $a : X \to A$ is    generically finite, then $a$ is separable.
\end{proposition}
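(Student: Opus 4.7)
My plan is to argue by contradiction: suppose $a$ is inseparable; I will show that this forces $\kappa(K_X) > 0$, contradicting the hypothesis.

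First I would reduce to the case of a finite morphism via the Stein factorization $a = f \circ g$, where $g \colon X \to Y$ is birational with $Y$ normal projective and $f \colon Y \to A$ is finite. Inseparability of $a$ transfers to $f$, and since $g$ is birational between normal varieties, $\kappa(K_X) = \kappa(K_Y)$; so it suffices to derive a contradiction from $\kappa(K_Y) \leq 0$ together with the inseparability of $f$. Next I would factor $f$ through the separable closure of $k(A)$ in $k(Y)$: taking $Z$ to be the normalization of $A$ in this field yields
\begin{equation*}
Y \xrightarrow{\pi} Z \xrightarrow{f'} A
\end{equation*}
with $\pi$ finite purely inseparable of degree $p^e > 1$ and $f'$ finite separable.

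The intermediate goal is to show that $f'$ is étale, which by the Serre-Lang theorem will then force $Z$ to be an abelian variety and $f'$ to be an isogeny. This is the crux, and I would invoke the result of Igusa stated in Serre's article \cite{S58}. Functoriality of the Albanese applied to $X \to Y \to Z \to A$, combined with $A = \operatorname{Alb}(X) = \operatorname{Alb}(Y)$ (birational invariance of the Albanese) and the universal property of $a$, should prohibit nontrivial ramification of $f'$: a ramification divisor would enlarge $\operatorname{Alb}(Z)$ beyond $A$, and then the projection onto the extra factor combined with the Albanese property of $X$ would produce a morphism from $X$ to an abelian variety incompatible with the Albanese factorization.

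Finally, once $Z$ is abelian and $\pi \colon Y \to Z$ is a nontrivial finite purely inseparable morphism, I would use the Jacobson (or Rudakov-Shafarevich) correspondence to describe $\pi$ as the quotient of $Y$ by a $p$-closed saturated foliation $\mathcal{F} \subset T_Y$. A relative canonical bundle formula of Rudakov-Shafarevich type then expresses $K_Y$ as $\pi^* K_Z$ plus a nonzero effective divisor class determined by $\mathcal{F}$; using $K_Z = 0$ (since $Z$ is abelian) together with the triviality of $T_Z \cong \sO_Z^{\oplus g}$, I would extract enough independent pluricanonical sections to conclude $h^0(m K_Y) \geq 2$ for some $m > 0$, contradicting $\kappa(K_Y) \leq 0$. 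The main obstacle is the reduction to the étale situation via the Igusa-Serre theorem; the Stein-factorization reduction and the concluding foliation-theoretic argument are otherwise standard once $Z$ is known to be abelian.
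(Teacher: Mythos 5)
Your overall strategy (strip off the inseparability, then rule it out with the foliation-theoretic canonical bundle formula and the positivity of the forms pulled back from the abelian variety) is in the right family, and your concluding step --- once $Z$ is known to be abelian --- is essentially the computation the paper performs. But the crux of your reduction, namely that the separable part $f' : Z \to A$ is \'etale so that $Z$ becomes an abelian variety by Serre--Lang, is a genuine gap, and the mechanism you propose for it is incorrect. A ramification divisor of $f'$ does not enlarge $\mathrm{Alb}(Z)$: since $X$ dominates $Z$, the universal property already gives a surjection $A = \mathrm{Alb}(X) \twoheadrightarrow \mathrm{Alb}(Z)$, so $\mathrm{Alb}(Z)$ can never be ``bigger'' than $A$; and ramified finite separable Albanese morphisms onto abelian varieties exist in abundance (in characteristic $\ne 2$, a double cover of an abelian surface branched along a smooth ample divisor $D \in |2L|$ has irregularity $2$ and its Albanese morphism is the covering map itself). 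The Igusa--Serre result you invoke (\cite[Th\'eor\`eme 4]{S58}) only says that $H^0(A,\Omega_A)\to H^0(X,\Omega_X)$ is injective; it gives no control on ramification. Since your argument for \'etaleness nowhere uses $\kappa(K_X)\le 0$, it cannot be right, and with that hypothesis the claim is essentially as deep as the proposition itself. There is also a smaller problem at the outset: for the Stein factorization $g : X \to Y$ with $Y$ merely normal one only gets $H^0(X,mK_X)\hookrightarrow H^0(Y,mK_Y)$, i.e.\ $\kappa(K_Y)\ge \kappa(K_X)$, so $\kappa(K_X)\le 0$ does not transfer to $Y$ in the direction you need.

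The paper sidesteps the need for an abelian intermediate target entirely. It peels off one height-one layer of inseparability on the $X$-side: $\sF := \ker(\sT_{X_{\reg}} \to a^*\sT_A)$ is a nonzero foliation precisely when $a$ is inseparable, and the quotient $Y = X/\sF$ still carries an Albanese morphism $b : Y \to A$, now of strictly smaller degree. Ekedahl's formula combined with $\sF \cong (\Omega_{X_{\reg}}/\sG|_{X_{\reg}})^*$ gives $pK_X = g^*K_Y + (p-1)\det\sG$, where $\sG$ is the saturated image of $a^*\Omega_A$ in $\Omega_X$; Serre's injectivity shows $\sG$ is generically generated by a section space of dimension at least $\dim A > \rk \sG$, whence $\kappa(\det\sG)>0$, so $\kappa(K_Y)\ge 0$ would force $\kappa(K_X)>0$. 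Thus $\kappa(K_Y)=-\infty$, and an induction on $\deg a$ (minimal counterexample) finishes: $b$ must then be separable, which forces $\kappa(K_Y)\ge 0$, a contradiction. If you want to salvage your write-up, replace the ``$Z$ is abelian'' step by this induction; the essential point is that the positivity input comes from the forms pulled back from the Albanese variety $A$ itself, not from the target of the purely inseparable quotient being abelian.
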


\begin{proof} In what follows we use the the notation and results from \cite{E87}.
Let us assume that the above statement is false. Take then a counterexample $X$ such that the degree of  $a$ is minimal. Set $\sF:= \ker (\sT_{X_{\reg}} \to (a|_{X_{\reg}})^* \sT_A)$, where $\sT_{X_{\reg}}:= \sHom_{X_{\reg}}(\Omega_{X_{\reg}},\sO_{X_{\reg}})$ is the sheaf of derivations on $X_{\reg}$. Then $\sF$ is the sheaf of (local) derivations on ${X_{\reg}}$  that vanish when restricted to pullbacks of (local) functions on $A$. In particular, this property is closed under $p$-th power and Lie-bracket. Furthermore, as the image of $\sT_{X_{\reg}} \to (a|_{X_{\reg}})^* \sT_A$ is torsion-free, $\sF$ is saturated. Hence $\sF$ is a foliation. Let  $Ann (\sF):=\{s\in \sO_X| \xi (s|_{X_{\rm reg}})=0,\ \forall \xi \in \sF\}$ and  $Y:={\rm Spec }_X Ann(\sF)$, which is known to be normal (e.g. \cite[line 6 of page 105]{E88} claims this over $X_{\reg}$ and then it is easy to see that $Ann(\sF)$ is $S_2$, as $s \in \sO_X$ is contained in $Ann(\sF)$ if and only if so is $s|_{X_{\reg}}$).

Since $\sF$ is trivial on pullbacks of functions from $A$, $Ann(\sF)$ contains $f^{-1} \sO_A \subseteq \sO_X$. This then implies that
 there is is an induced morphism $b : Y \to A$ such that $b \circ g =a$, where $g : X \to Y$ is the induced morphism. Indeed, topologically, $b$ agrees with $a$, and then on the ring level, the containment $Ann(\sF)\supseteq f^{-1} \sO_A$ defines it.
Furthermore, for the future reference, as $g$ is finite, we may choose smooth big open sets $Y^0 \subseteq Y$ and $X^0 \subseteq X$, such that $g^{-1}(Y^0)=X^0$ and $\sF|_{X^0}$ is locally free.

Note that $b$ is also an Albanese morhism, since a factorization through a morphism of abelian varieties would yield also a similar factorization of $a$. Assume for a second that we know that $\kappa(K_{ Y}) = - \infty$. Since the degree of  $ b$ is smaller than the degree of $a$, by our initial assumption, it would follow that $ b$ is separable, and hence it would follow that $\kappa(K_{ Y}) \geq 0$, which is a contradiction.

So, we only have to show that $\kappa(K_{ Y} ) = - \infty$. By the canonical bundle formula for quotients by foliations (identifying divisors on $X$ and $X^0$ and on $ Y$ and $Y^0$, and using \cite[Cor 3.4]{E87})  we have $K_{X} + (1-p) \det \sF = f^*K_{ Y}$. Note that if $\sG$ is the saturation of $\im(a^* \Omega_A \to \Omega_{X}$), then
\begin{enumerate}
 \item $\sF \cong (\Omega_{X_{\rm reg}}/\sG|_{X_{\rm reg}})^* \  \Rightarrow \  \det \sF = \det \sG -K_{X}$, and
 \item as by \cite[Th\'eor\`eme 4]{S58} the natural map  $H^0(A, \Omega_A) \to H^0(X, \Omega_X)$ is an injection, and $\sG$ is generically generated by  a $\dim H^0(X,\sG)> \rk \sG$ dimensional section space,  $\kappa(\det \sG)>0$ \cite[Lemma 4.2]{Z16}.
 \end{enumerate}
In particular, it follows that $f^* K_{ Y }= K_X + (1-p)(-K_X + \det \sG) = p K_X + (1-p) \det \sG$. Now, assume that $\kappa(K_{ Y}) \geq 0$. Then, we would have $\kappa(pK_X + (1-p) \det \sG) \geq 0$, and then using that $\kappa(\det \sG)>0$ we would obtain that $\kappa(K_X)>0$, which is a contradiction.
\end{proof}

\section{Cartier modules and their Fourier-Mukai transforms}
We now introduce two Cartier modules $\Omega _0$ corresponding to the cases $\kappa (X)=0$ and $\kappa _S(X)=0$. Understanding the properties of these Cartier modules and their Fourier-Mukai transforms is the key to proving all the results in this paper.
\begin{notation}
\label{notation:Lambda}
For a smooth, projective variety $X$ over $k$, with Albanese morphism $a : X \to A$ and Kodaira dimension $\kappa(X)=0$, we introduce the following two cases of notation:
\begin{enumerate}

\item We have two possibilities (two different cases) for the Cartier modules $\Omega_e$, which will be referenced as notations Notation \ref{notation:Lambda}.$(a)$ and Notation \ref{notation:Lambda}.$(b)$:

\begin{enumerate}
 \item \label{itm:m_equals_1}
$\Omega_e:= F^e_* a_* \omega_X$. We regard $\Omega_0=a_* \omega_X$ as a Cartier module with the structure map $F_* \Omega_0 \to \Omega_0$ induced by the Gorthendieck trace of $X$.
\item \label{itm:arbitrary_m} If $\kappa_S(X)=0$ is also assumed \cite[Sec 4.1]{HP13}, then we sometimes define $\Omega_e:= F^e_* a_* \omega_X^r$, where $r$ is the Calabi-Yau index of $X$, that is the smallest positive integer such that $H^0(X, r K_X) \neq 0$. It is known that $r | p-1$ \cite[Lemma 4.2.4]{HP13}. Let $D \in |rK_X|$ be the unique element. Then multiplication by $\frac{(p^e-1)(r-1)}{r}D$ composed with the trace map yields a Cartier module structure on $\Omega_0$ (see \cite[Lemma 2.2.3 \& Lemma 4.2.5]{HP13}).
\end{enumerate}
\end{enumerate}
In either case we define:
\begin{enumerate}[resume]
\item $\Lambda_e := R \hat S ( D_A( \Omega_e )) = V^{e,*} \Lambda_0 $, where the Cartier module structure on $\Omega_0$ induces natural maps $\Lambda_{e-1} \to \Lambda_e$.
\item $\Lambda:= \varinjlim \sH^0(\Lambda_e)$.
\item $\Lambda_e':= \im ( \Lambda_e \to \Lambda)$. Then $\Lambda_e'= V^{e,*} \Lambda_0'$ and $\varinjlim \Lambda_e' = \Lambda$ hold. Note that there are natural injections $\Lambda_{e-1}' \to \Lambda_e'$, in particular $\Supp \Lambda_0' \subseteq \Supp V^{1,*} \Lambda_0'$ which implies
    $p \Supp \Lambda_0' \subseteq \Supp \Lambda_0'$.
\item Define $V_e:= ((-1_A)^* D_A R S(\Lambda_e'))[-g]$. Note that there are natural surjections $V_e\to V_{e-1}$.
\item $\Omega= \varprojlim \Omega_e$, where $\varprojlim$ is taken in the category of $\mathcal O _A$ modules, as opposed to the category of quasi-coherent sheaves. According to \cite[Theorem 3.1.1]{HP13}, $\Omega= ((-1_A)^* D_A R S(\Lambda))[-g] = \varprojlim V_e $.
\end{enumerate}

\end{notation}

\begin{lemma}
\label{lem:support}
If $X$ is a smooth projective variety with $\kappa  (X)=0$ and $\Omega \neq 0$, then using Notation \ref{notation:Lambda}.$(a)$  $\Supp \Lambda_0'=\{P\}$ for some torsion point $P \in \hat A$ and $V_0\otimes P$ is a
unipotent vector bundle. 
If furthermore, $\kappa_S(X)=0$ is assumed, then using Notation \ref{notation:Lambda}.$(b)$, $\Supp \Lambda_0'=\{0\}$ and $V_0$ is a unipotent vector bundle. 
\end{lemma}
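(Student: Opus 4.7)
The proof strategy rests on combining three constraints on $\Supp \Lambda_0'$: $p$-invariance under multiplication on $\hat A$, containment in the cohomological support locus $V^0(\omega_X^m)$ (with $m=1$ in case (a) and $m=r$ in case (b)), and the Kodaira dimension hypothesis. The $p$-invariance is already recorded in Notation \ref{notation:Lambda}: from the injection $\Lambda_0' \hookrightarrow V^{1,*}\Lambda_0'$ together with the identification of Verschiebung's action on closed points with multiplication by $p$, one has $p \cdot \Supp \Lambda_0' \subseteq \Supp \Lambda_0'$. Applying Lemma \ref{lem:p_closed} thus yields immediately that each maximal-dimensional irreducible component of $\Supp \Lambda_0'$ is of the form $\alpha + A'$ for an abelian subvariety $A' \subseteq \hat A$ and a torsion point $\alpha \in \hat A$.

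The containment $\Supp \Lambda_0' \subseteq V^0(\omega_X^m) := \{P \in \hat A : h^0(\omega_X^m \otimes a^*P^{-1}) \neq 0\}$ would be established by unwinding the Fourier-Mukai definition of $\Lambda_0 = R\hat S D_A(\Omega_0)$. For $y \in \hat A$ corresponding to $P \in \Pic^0(A)$, the stalk $\mathcal{H}^0(R\hat S D_A \Omega_0)_y$ being nonzero translates, via Grothendieck-Serre duality on $\hat A$ and base change, to nonvanishing of $H^0(\omega_X^m \otimes a^*P^{-1})$, as developed in \cite[Section 3.1]{HP13}.

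To rule out positive-dimensional components, suppose $\dim A' > 0$. Then for a dense open $U \subseteq \alpha + A'$ one has $h^0(\omega_X^m \otimes a^*P^{-1}) \neq 0$ for all $P \in U$. After replacing $X$ by an appropriate isogeny cover of $A$ (in the spirit of Lemma \ref{l-1}) to trivialize the torsion shift $\alpha$ and simultaneously turn the varying $P$'s into a single class on the cover, the resulting positive-dimensional family of effective divisors sits inside a fixed linear system, contradicting $\kappa(X) = 0$ (or $\kappa_S(X) = 0$). Thus every maximal-dimensional component of $\Supp \Lambda_0'$ is a single torsion point, so $\Supp \Lambda_0'$ is a finite set of torsion points.

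The main obstacle, as I see it, is to refine this finite set to a single point (and specifically to $\{0\}$ in case (b)). I expect this step to require the specific Cartier module structure on $\Omega_0$: in case (b), the use of the unique divisor $D \in |rK_X|$ in the Cartier map (via multiplication by $\frac{(p-1)(r-1)}{r}D$ composed with the Grothendieck trace) should naturally single out the origin $0 \in \hat A$ and rule out any other $p$-orbit of torsion points in the support; in case (a), an analogous but looser argument selects some torsion point $P$. Once $\Supp \Lambda_0'$ is a single point, the unipotency of $V_0$ (respectively $V_0 \otimes P$) is immediate from the Fourier-Mukai equivalence between Artinian $\mathcal{O}_{\hat A}$-modules supported at a single point and homogeneous vector bundles with a single $\Pic^0$-character, as observed after Definition \ref{def:unipotent}.
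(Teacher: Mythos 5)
Your setup (the $p$-invariance of $\Supp\Lambda_0'$, Lemma \ref{lem:p_closed}, and the containment in the cohomological support locus via \cite[Cor 3.2.1]{HP13}) matches the paper, but the two decisive steps are either flawed or missing. First, your exclusion of positive-dimensional components does not work as stated: an isogeny cover can only trivialize the finitely many torsion classes in the kernel of the dual isogeny, so pulling back cannot ``turn the varying $P$'s into a single class'' --- the positive-dimensional family $\{P\}\subseteq\alpha+A'$ remains a positive-dimensional family of pairwise distinct line bundles on the cover, and the twisted sections do not land in one fixed linear system; moreover an inseparable cover would also endanger the $\kappa=0$ hypothesis. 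The paper's actual mechanism is different and stays on $X$ itself: choose $m$ with $m\alpha=0$ and $m>\dim A'$, so the sum map $V'^{\times m}\to A'$ is surjective with positive-dimensional fibers when $\dim V'>0$; this produces infinitely many $m$-tuples $(P_1,\dots,P_m)$ in the support with $P_1+\cdots+P_m=0$, and the multiplication maps $H^0(rK_X-P_1)\otimes\cdots\otimes H^0(rK_X-P_m)\to H^0(mrK_X)$ then yield infinitely many elements of $|mrK_X|$, contradicting $\kappa(X)=0$.

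Second, the step you flag as the ``main obstacle'' --- passing from a finite set of torsion points to a single point --- is left unproven, and your guess that it needs the Cartier module structure is off target: the paper's argument is elementary. If $P\neq P'$ are both in the support, pick $D\in|rK_X-P|$, $D'\in|rK_X-P'|$ and $m$ with $mP=mP'=0$; then $mD$ and $mD'$ are two distinct members of $|mrK_X|$, again contradicting $\kappa(X)=0$. The Cartier/Frobenius structure enters only in case (b), to pin the single point to $\hat 0$: by $\kappa_S(X)=0$ the map $k\cong H^0(A,F^{er}_*a_*\omega_X^r)\to H^0(A,a_*\omega_X^r)\cong k$ is bijective, and by \cite[Cor 3.2.1]{HP13} this map is identified with $\Lambda_0\otimes k(0)\to\Lambda_e\otimes k(0)$, so $0\in\Supp\Lambda_0'$; this concrete identification is absent from your sketch. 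Your final paragraph on unipotency via the Fourier--Mukai equivalence (after translating the Artinian module to be supported at $\hat 0$) is fine and agrees with the paper.
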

\begin{proof}
We show the two cases at once indicating the differences between the two cases at the adequate places. Let $V:= \Supp \Lambda_0'$, $V'$ a component of $V$ of maximal dimension and $m$, $\alpha$ and $A'$ as in Lemma \ref{lem:p_closed}. 
Note that $V'$ exists (or equivalently $V \neq \emptyset$), since if $\Supp \Lambda_0'=\emptyset$, then $\Lambda=\varinjlim \Lambda_e' =0$, and then $\Omega =0$, which contradicts our assumption.

Since we may freely replace $m$ by any of its multiples, we may assume $m>\dim A'$ and hence $(V- \alpha)^{\times m}=V^{\times m}\to A'$ is surjective. Note also that if $\dim V >0$, then the above surjective map has positive dimensional fibers. In particular, in that case, there are infinitely many $m$-tuples $(P_1, \dots, P_m)\in V^{\times m}$, such that
\begin{equation*}
P_1+ \dots + P_m= 0.
\end{equation*}
 If $\dim V=0$ the equation $P_1 + \dots + P_m = 0$ still holds, but we can guarantee only one $m$-tuple, for which it is satisfied.

At this point we have to consider a multiplication map, although a slightly different one in the two cases of Notation \ref{notation:Lambda}.

In case $(a)$, we consider
\begin{equation*}
H^0(K_X-P_1)\otimes \ldots \otimes H^0(K_X-P_m)\to H^0(mK_X),
\end{equation*}
and note that for all $1 \leq i \leq m$,  $H^0(K_X-P_i) \neq 0$, since:
\begin{equation*}
P_i \in \Supp \Lambda_0' \Rightarrow P_i \in \Supp \Lambda_0
 \underbrace{\Leftrightarrow}_{\textrm{\cite[Cor 3.2.1]{HP13}}} H^0(A, \Omega_0 \otimes P_i^\vee) \neq 0 \Rightarrow H^0(X, \omega_X \otimes a^*P_i^\vee)\ne 0.
\end{equation*}
Similarly, in case $(b)$ consider
\begin{equation*}
H^0(rK_X-P_1)\otimes \ldots \otimes H^0(rK_X-P_m)\to H^0(mrK_X).
\end{equation*}
and note (by a similar argument) that for all $1 \leq i \leq m$,  $H^0(rK_X-P_i) \neq 0$.

In either case, if $\dim V>0$, by the infinite choices of the $m$-tuples $P_1, \dots, P_m$ we obtain infinitely many elements of $|m K_X|$, resp., $|mrK_X|$, and this contradicts the $\kappa(X)=0$ assumption. Therefore, $\dim V=0$ and each point of $V$ is a torsion point (using Lemma \ref{lem:p_closed}).
We claim that in fact $V$ consists of a unique point.
Suppose that $P\ne P'\in V$  and $D\in |rK_X-P|$, $D'\in |rK_X-P'|$ (where $r=1$ in case (a)), then since $P,P'$ are torsion, there is an integer $m>0$ such that $mP=mP'=0$, but then $mD,mD'\in |mrK_X|$ contradicting the $\kappa (X)=0$ assumption.
This concludes the support statement in the $\kappa(X)=0$ case.
The fact  that the support is $\hat 0$ in the  $\kappa_S(X)=0$ case  follows since  $k \cong H^0(A, F^{er}_* a_* \omega_X^r)  \to H^0(A, a_* \omega_X^r) \cong k$ identifies with $\Lambda_0 \otimes k(0) \to \Lambda_e \otimes k(0)$ (by \cite[Cor 3.2.1]{HP13} and Notation \ref{notation:Lambda}.(2)), and the former is a bijective map by the $\kappa_S(X)=0$ assumption.

The statement that $V_0 \otimes P$ is a unipotent vector bundle follows from the isomophisms (using \cite[Sec. 3]{M81})
\begin{equation*}
V_0 \otimes P \cong ((-1_A)^* D_A R S(\Lambda_0'))[-g] \otimes P\cong   R S(D_{\hat A} (\Lambda_0')) \otimes P \cong R S(T_{-P}(D_{\hat A} (\Lambda_0')))
\end{equation*}
as $\Lambda_0'$ is a coherent sheaf supported at $P \in \hat A$, and hence, so is $D_{\hat A} (\Lambda_0')$, and then $T_{-P}(D_{\hat A} (\Lambda_0'))$ is a coherent sheaf supported at $\hat 0 \in \hat A$.


\end{proof}

\begin{lemma}
\label{lem:big_open_set_surjective}
Let us assume, using Notation \ref{notation:Lambda}.$(a)$, that $a$ is generically finite. Then, over the open set $U$ where  $a$ is  finite,  $V_0 \to a_* \omega_X$ is surjective.
\end{lemma}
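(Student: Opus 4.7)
The plan is to produce a natural morphism $V_0\to a_*\omega_X$ via Fourier--Mukai duality, identify its image with the stable image $S:=\bigcap_{e\geq 0}\mathrm{im}(F^e_{A*}\Omega_0\to \Omega_0)$ of the iterated Cartier trace, and then show $S|_U=\Omega_0|_U$ using that $a|_{a^{-1}(U)}$ is finite and $X$ is smooth.

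First I will set up the map. Using that $a_*\omega_X$ is a GV-sheaf (so $\Lambda_0=R\hat S D_A(\Omega_0)$ is concentrated in degree zero, \cite{HP13}) and that the Verschiebung $V^{e,*}$ is flat, each $\Lambda_e=V^{e,*}\Lambda_0$ is a sheaf in degree zero, so the contravariant functor $\Phi(-):=((-1_A)^*D_A RS(-))[-g]$ satisfies $\Phi(\Lambda_e)\cong \Omega_e$ by Mukai inversion combined with biduality. Applying $\Phi$ to the surjection $\Lambda_e\twoheadrightarrow \Lambda_e'$ from Notation~\ref{notation:Lambda}.(3) produces a morphism $V_e\to\Omega_e$, and the compatibility of the Verschiebung transitions $\Lambda_e\to\Lambda_{e+1}$ with the inclusions $\Lambda_e'\hookrightarrow\Lambda_{e+1}'$ translates under $\Phi$ into commutative squares
\[
\xymatrix{V_{e+1}\ar@{->>}[d]\ar[r] & \Omega_{e+1}\ar[d] \\ V_e\ar[r] & \Omega_e}
\]
whose right vertical is the Cartier trace and whose left vertical is surjective.

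Next I will identify the image of $V_0\to\Omega_0$. Taking $\varprojlim$ of the above squares and invoking both descriptions $\Omega=\varprojlim V_e=\varprojlim \Omega_e$ from Notation~\ref{notation:Lambda}.(6), the top horizontal in the limit becomes the identity $\Omega\to\Omega$, while the left vertical $\Omega\twoheadrightarrow V_0$ is surjective because the system $(V_e)$ has surjective transitions (so $\varprojlim{}^1 V_e=0$). Therefore
\[
\mathrm{im}(V_0\to\Omega_0)=\mathrm{im}(\Omega\twoheadrightarrow V_0\to\Omega_0)=\mathrm{im}(\Omega\to\Omega_0),
\]
and since $(\Omega_e)$ satisfies the Mittag--Leffler condition (pointed out in the outline), the last image equals the stable image $S$.

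Finally I will verify $S|_U=\Omega_0|_U$. Over $U$ the morphism $a|_{a^{-1}(U)}$ is proper and quasi-finite, hence finite, hence affine, so $a_*$ is exact there. Since $X$ is smooth, the Cartier--Grothendieck trace $F_{X*}\omega_X\twoheadrightarrow \omega_X$ is surjective; pushing forward by $a$ and using $F_{A*}a_*=a_*F_{X*}$ (from $F_A\circ a=a\circ F_X$) gives $F_{A*}\Omega_0|_U\twoheadrightarrow \Omega_0|_U$ surjective, and iterating, $F^e_{A*}\Omega_0|_U\twoheadrightarrow \Omega_0|_U$ for every $e\geq 0$; hence $S|_U=\Omega_0|_U$, which combined with the previous step gives the lemma. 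The hard part will be the rigorous image identification $\mathrm{im}(\Omega\to\Omega_0)=S$, since $\varprojlim$ is taken in $\mathcal O_A$-modules rather than in quasi-coherent sheaves (per Notation~\ref{notation:Lambda}.(6)); but this should be a stalk-local consequence of the Mittag--Leffler condition already stated.
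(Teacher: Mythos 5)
Your argument is correct and is essentially the paper's own proof: the paper likewise observes that $F^e_*a_*\omega_X\to a_*\omega_X$ is surjective over $U$ (finiteness of $a$ there plus surjectivity of the Cartier trace on the smooth $X$), deduces that $\Omega=\varprojlim V_e\to a_*\omega_X$ is surjective over $U$, and concludes via the factorization through $V_0$. Your detour through the stable image $S$ is harmless but not needed --- for the conclusion it suffices that $\im(\Omega\to\Omega_0)|_U=\Omega_0|_U$, which already follows from the surjectivity of the transition maps over $U$.
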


\begin{proof}
Over $U$, $F^e_* a_* \omega_X \to a_* \omega_X$ is surjective. Hence, $\varprojlim V_e \cong \varprojlim F^e_* a_* \omega_X \to a_* \omega_X$ is surjective. However, this map factors through $V_0 \to a_* \omega_X$, which then also has to be surjective.
\end{proof}

\section{Proofs}

\begin{theorem}\label{t-surj}
If $X$ is a smooth projective variety for which either
\begin{enumerate}
\item $\kappa_S(X)=0$, or
\item $X$ is of maximal Albanese dimension with $\kappa(X)=0$,
\end{enumerate}
then the Albanese morphism $a : X \to A$ is surjective.
\end{theorem}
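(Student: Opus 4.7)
The plan is to set up the Cartier-module machinery of Notation \ref{notation:Lambda} (variant (b) in case (1) with the Calabi-Yau index $r$, variant (a) in case (2)) and then argue that the inverse-limit sheaf $\Omega$ has set-theoretic support equal to all of $A$. Since each $\Omega_e$ is supported on $a(X)$ (the absolute Frobenius on $A$ being a homeomorphism on the underlying space), so is $\Omega$, and therefore full support will force $a(X)=A$.

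The first step is to verify $\Omega\neq 0$. In case (1), $H^0(A,\Omega_e)=H^0(X,rK_X)=k$ for every $e$, and by construction the transition maps of the twisted Cartier module of Notation \ref{notation:Lambda}(b) act bijectively on this one-dimensional space under the $\kappa_S(X)=0$ hypothesis, so $H^0(\Omega)\neq 0$. In case (2) I would argue by contradiction: if $\Omega=0$, then since the transition maps $V_e\to V_{e-1}$ are surjective, the natural map $\Omega\to V_0$ is surjective and hence $V_0=0$; but Lemma \ref{lem:big_open_set_surjective} then forces $a_*\omega_X$ to vanish on the open set $U\subseteq A$ over which $a$ is finite. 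This is impossible, because $a$ is generically finite, so $U\cap a(X)$ is a non-empty open subset of $a(X)$ (containing its generic point) on which $a_*\omega_X$ has positive generic rank.

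Once $\Omega\neq 0$, Lemma \ref{lem:support} asserts that $\Lambda_0'$ has zero-dimensional support concentrated at a single torsion point $P\in\hat A$ (with $P=\hat 0$ in case (1)) and that $V_0\otimes P$ is a unipotent vector bundle. In particular $V_0$ is a non-zero vector bundle on the connected variety $A$, so $\Supp V_0=A$. The Mittag-Leffler property supplied by the surjections $V_e\to V_{e-1}$ makes $\Omega\to V_0$ surjective, so $\Supp\Omega\supseteq\Supp V_0=A$. Combining this with the containment $\Supp\Omega\subseteq a(X)$ from the first paragraph yields $a(X)=A$, i.e.\ $a$ is surjective.

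The main obstacle will be the non-vanishing of $\Omega$ in case (2): unlike case (1), one cannot point to a Frobenius-stable global section, and instead has to propagate non-vanishing from $V_0$ back up to $\Omega$ using the surjectivity of the transitions $V_e\to V_{e-1}$, and then invoke Lemma \ref{lem:big_open_set_surjective}. This is precisely where the generic-finiteness of $a$ is essential, since without it the open set $U$ could fail to meet $a(X)$ in a locus where $a_*\omega_X$ is non-zero, and the contradiction would collapse.
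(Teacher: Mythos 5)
Your argument is correct and follows the paper's proof essentially step for step: establish $\Omega \neq 0$ (via the stabilizing one-dimensional $H^0$ in case (1), and via Lemma \ref{lem:big_open_set_surjective} in case (2)), invoke Lemma \ref{lem:support} to see that $V_0$ is a nonzero twisted unipotent bundle with full support, and use the surjections $V_e \to V_{e-1}$ together with $\Omega = \varprojlim V_e$ and $\Supp\Omega \subseteq a(X)$ to conclude. The only cosmetic difference is that in case (2) you phrase the non-vanishing of $\Omega$ as a contradiction through $V_0=0$, whereas the paper reads off the generic surjectivity of $\Omega \to a_*\omega_X$ directly from the same lemma.
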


\begin{proof}
First, we prove the surjectivity statement.
In the case of assumption $(1)$, we use Notation \ref{notation:Lambda}.$(b)$. Then, as the inverse system $\Omega_e$ is Mittag-Leffer \cite[Lem 13.1]{G04}\cite[Prop 8.1.4]{BS13}, $H^0(\Omega) = \varprojlim H^0(\Omega_e) = k$. So, $\Omega \neq 0$. In the case of assumption $(2)$, we use  Notation \ref{notation:Lambda}.$(a)$. Since $a$ is generically finite in this case, $\Omega \to a_*\omega _X$ is generically surjective (Lemma \ref{lem:big_open_set_surjective}) and  in particular $\Omega \ne 0$.

In either case, by the assumptions and Lemma \ref{lem:support}, $\Supp \Lambda_0'$ is a single torsion point. However, then the $V_e$ are homogeneous vector bundles, and $V_{e+1} \to V_e$ are surjective. Since $\Omega =\varprojlim V_e$, it follows that $\Supp \Omega =A$, which then implies surjectivity of $a:X\to A$.

\end{proof}


\begin{theorem}\label{t-K}
Let  $X$ be a smooth projective variety of maximal Albanese dimension with $\kappa(X)=0$.
Then the Albanese morphism $a: X \to A$ is birational. 
\end{theorem}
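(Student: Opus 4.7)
The plan is to follow the strategy sketched in the introduction, which reduces the theorem to a rank count on global sections of $a_*\omega_X$ after replacing $A$ by a suitable isogenous abelian variety on which a key vector bundle becomes trivial. First, by Theorem \ref{t-surj}(2) the map $a$ is surjective, hence generically finite and surjective with $\dim A = \dim X$. Since $\kappa(K_X) = 0 \leq 0$, Proposition \ref{prop:Albanese_separable} then yields that $a$ is \emph{separable}, a fact that will be essential for keeping the base change integral. For simplicity I assume $h^0(\omega_X) \ne 0$, so that $h^0(\omega_X) = 1$ by $\kappa(X)=0$.

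Adopting Notation \ref{notation:Lambda}.$(a)$, Lemma \ref{lem:big_open_set_surjective} gives that $\Omega \to a_*\omega_X$ is generically surjective, hence $\Omega \ne 0$. By Lemma \ref{lem:support} there is then a torsion point $P \in \hat A$ with $\Supp \Lambda_0' = \{P\}$ and with $V_0 \otimes P$ a unipotent vector bundle of some rank $r$ on $A$. The core reduction is to replace $A$ by an isogenous abelian variety over which $V_0$ becomes trivial: since $P$ is torsion, first choose an \'etale isogeny $\beta : B \to A$ with $\beta^*P \cong \sO_B$, so that $\beta^*V_0 \cong \beta^*(V_0 \otimes P)$ is unipotent on $B$; then Lemma \ref{l-1} supplies a further isogeny $\gamma : A' \to B$ such that $\alpha^*V_0 \cong \sO_{A'}^{\oplus r}$ for $\alpha := \beta \circ \gamma$. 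Because $a$ is separable, the fiber product $X \times_A A'$ is generically reduced, and I would extract from it an integral component $X'$ mapping generically finitely and separably onto $A'$, still satisfying $\kappa(X') = 0$ and maximal Albanese dimension. Pullback of $V_0 \to a_*\omega_X$, combined with Lemma \ref{lem:big_open_set_surjective} applied to $a' : X' \to A'$, provides a generically surjective map $\sO_{A'}^{\oplus r} \to a'_*\omega_{X'}$.

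The argument then concludes with a rank count. Since $\sO_{A'}^{\oplus r}$ is generated by its $r$ standard global sections, so is the image $I$ of $\sO_{A'}^{\oplus r} \to a'_*\omega_{X'}$. The injection $H^0(I) \hookrightarrow H^0(a'_*\omega_{X'}) = H^0(\omega_{X'}) \cong k$ forces $h^0(I) \leq 1$, so $I$ is globally generated by at most a single section and therefore has generic rank at most $1$. Combined with generic surjectivity this forces $a'_*\omega_{X'}$ itself to have generic rank $1$, so $\deg a' = 1$; tracing through the covering construction this gives $\deg a = 1$, i.e.\ $a$ is birational.

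The main obstacle is the integrality of $X \times_A A'$: the isogeny $\alpha$ need not be \'etale when $A$ is non-ordinary, so \textit{a priori} the fiber product can fail to be reduced or can split into components in ways that would spoil the rank/degree comparison at the end. This is precisely where the separability of $a$ supplied by Proposition \ref{prop:Albanese_separable} becomes indispensable: it guarantees that $X \times_A A'$ is generically reduced and of the correct generic degree over $A'$, so that an integral component with the desired $\kappa$, maximal Albanese dimension, and degree behavior can be extracted and fed into the final rank argument.
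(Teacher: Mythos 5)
Your overall strategy matches the paper's: surjectivity from Theorem \ref{t-surj}, separability from Proposition \ref{prop:Albanese_separable}, trivialization of $V_0$ after an isogeny via Lemma \ref{l-1}, and a rank count on a generically surjective map from a trivial bundle to the pushforward of the dualizing sheaf. But there is a genuine gap at the central step, namely where you ``extract an integral component $X'$'' of $X\times_A A'$. If the fiber product decomposes into several components, then $\deg(X'\to A')<\deg a$, and proving $\deg(X'\to A')=1$ tells you nothing about $\deg a$; equivalently, if you work with the whole fiber product instead, $h^0$ of the relevant sheaf is only bounded by the number of components, not by $1$. Separability of $a$ gives you generic reducedness of the base change, but \emph{not} irreducibility, and irreducibility is exactly what must be proved. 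The paper does this by factoring the isogeny as $B\to C\to A$ with $C\to A$ \'etale and $B\to C$ purely inseparable: irreducibility of $C\times_A X$ is deduced from the universal property of the Albanese morphism (a proper component would yield a factorization of $a$ through a nontrivial isogeny $C/H'\to A$, which is impossible), and irreducibility of the further base change along $B\to C$ is automatic because that map is a homeomorphism; integrality then follows from generic reducedness (separability of $a$) together with the Gorenstein property of the base change. None of this appears in your proposal, and it is the actual content of the proof.

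There are also secondary issues you should repair. First, the \'etale isogeny killing $P$ exists not merely because $P$ is torsion --- a point of order divisible by $p$ would require an isogeny with kernel $\mu_p$, which is not \'etale --- but because $p\Supp\Lambda_0'\subseteq\Supp\Lambda_0'$ forces $pP=P$, so the order of $P$ divides $p-1$. Second, the simplifying assumption $h^0(\omega_X)\neq 0$ is not in the hypotheses and is not needed: the paper only uses the upper bound $h^0(\nu^*\omega_W)\leq 1$ coming from $\kappa=0$, applied on the normalization $\nu:\tilde W\to W$ (this normalization step is also needed in your argument, since the fiber product need not be normal, so ``$H^0(\omega_{X'})\cong k$'' requires justification). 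Third, Lemma \ref{lem:big_open_set_surjective} is stated for the Albanese morphism of a smooth variety and cannot be applied verbatim to $a':X'\to A'$; the correct move is to pull back the generically surjective map $V_0\to a_*\omega_X$ along the flat isogeny and identify $\tau^*a_*\omega_X$ with the pushforward of the relative dualizing sheaf of the fiber product by flat base change, as the paper does.
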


\begin{proof}
We use Notation \ref{notation:Lambda}.$(a)$ throughout the proof. As in the proof of Theorem \ref{t-surj}, $\Omega \neq 0$.
According to Theorem \ref{t-surj} and Proposition \ref{prop:Albanese_separable}, $a: X \to A$ is surjective and separable.
Let $\tau : B \to C \to A$ be a composition of isogenies such that
\begin{enumerate}
\item $\tau^* V_0$ is trivial,
\item $C \to A$ is \'etale, and
\item $B \to C$ is inseparable.
\end{enumerate}
The existence of such a factorization follows from Lemma \ref{l-1}, Lemma \ref{lem:support} and \cite[Prop 4.45]{MvdG}. Note that, since by Lemma \ref{lem:support} $V_0 \otimes P$ is a unipotent vector bundle for some torsion line bundle $P$, $B \to A$ should dominate one isogeny that trivializes $P$ and also another one that trivializes the 
unipotent vector bundle.  This can be achieved for example by taking an isogeny attaining the latter (using Lemma \ref{l-1}), and then precomposing it with an adequate multiplication by $n$. We set $Z:= C \times_A X$.

\emph{We claim that $Z$ is a smooth projective variety over $k$ with $\kappa(Z)=0$.}
Smoothness follows since $C \to A$, and hence also $Z \to X$, are \'etale. To prove irreducibility note that the morphism $C \to A$ is given by a quotient by a finite group $H$. If $Z$ is not irreducible, then the stabilizer of any of its components is a proper subgroup $H' \subseteq H$. Furthermore, $Z/H'$ is the disjoint union of components isomorphic to $X$. Hence, any of these components induce a factorization $X \to C/H' \to A$. Since $H'$ is a proper subgroup of $H$,  $C/H' \to A$ is a non-trivial isogeny wich contradicts the assumption that $a : X \to A$ is the Albanese morphism. This concludes the proof of irreducibility. As $Z \to X$ is \'etale, $\kappa(Z)=0$ also holds, which concludes the above claim.

Define then $W:=Z \times_C B$. The morphism $ W \to Z$ is a topological isomorphism, so $W$ is irreducible. Furthermore, $W \to B$ is separable, so $W$ is generically reduced. Also, as $B \to C$ is flat with $\omega_{B/C} = 0$, $W$ is Gorenstein (and hence by the generic reducibility also integral) and $\omega_W= f^* \omega_X$, where $f : W \to X$ is the induced morphism. Let $\nu :\tilde W \to W$ be the normalization, then $ \nu ^* K_W$  is a Cartier divisor and so   $\kappa(\nu ^* K_{ W})=0$. However, then it follows that $h^0(b_*\nu _*  \nu ^* \omega_{W}) \leq 1$, where $b : W \to B$ is  the induced morphism. On the other hand, we have a generically surjective morphism (according to Lemma \ref{lem:big_open_set_surjective}):
\begin{equation*}
\tau^* V_0 \to \underbrace{\tau^* a_* \omega_X \cong b_* \omega_W}_{\textrm{flat base-change}}\to b_* \nu _*  \nu ^*\omega_W.
\end{equation*}
As $\tau^* V_0$ is a trivial vector bundle and $h^0(b_*\nu _*  \nu ^* \omega_W)=h^0(\nu ^* \omega_W ) \leq 1$, the image of the above map, has to be of rank $1$. So, $\deg b = 1$, and hence also $\deg a=1$.

\end{proof}

\begin{corollary}
\label{cor:ordinary}
If $X$ is a smooth, projective variety with $\kappa_S(X)=0$ and $b_1(X)=2 \dim X$, then the Albanese variety $A$ of $X$ is ordinary.
\end{corollary}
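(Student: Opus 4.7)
The plan is to reduce the corollary to Theorem \ref{t-K} plus the already known equivalence that an abelian variety $B$ is ordinary if and only if $\kappa_S(B)=0$ \cite[2.3.2]{HP13}. All of the substantive analysis (Cartier modules, Fourier--Mukai transforms, surjectivity and birationality of the Albanese) is already done in Theorems \ref{t-surj} and \ref{t-K}; what remains is to assemble these under the corollary's hypotheses and then transport $\kappa_S=0$ from $X$ to $A$ birationally.

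First I would apply Theorem \ref{t-surj}(1) to the hypothesis $\kappa_S(X)=0$ to conclude that the Albanese morphism $a \colon X \to A$ is surjective. Next, the standard identity $b_1(X) = 2 \dim A$, combined with the hypothesis $b_1(X) = 2 \dim X$, forces $\dim A = \dim X$; together with surjectivity this says $a$ is generically finite, i.e., $X$ is of maximal Albanese dimension. Since $\kappa_S(X)=0$ entails $\kappa(X)=0$ (this is implicit in Notation \ref{notation:Lambda}.(b), where one uses the \emph{unique} element of $|rK_X|$ to build the Cartier module structure, and is also recorded in \cite[\S 4.1]{HP13}), the hypotheses of Theorem \ref{t-K} are met, and we conclude that $a$ is birational.

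Finally, since $X$ and $A$ are birational smooth projective varieties of the same dimension, I would invoke birational invariance of $\kappa_S$ for smooth projective varieties to conclude $\kappa_S(A) = \kappa_S(X) = 0$; by \cite[2.3.2]{HP13}, $A$ is then ordinary. The one step outside the machinery already developed in the paper is exactly this birational invariance: Frobenius-stable pluricanonical sections extend across codimension $\geq 2$ loci on smooth varieties, and the Frobenius trace map is compatible with restriction to open subsets, so stable pluricanonical sections pull back isomorphically between smooth birational models. This is the only point where the proof could conceivably stumble, but it is either straightforward to verify directly or available by citation to \cite{HP13}; apart from that, the corollary is a short reassembly of the preceding theorems.
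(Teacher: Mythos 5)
Your proposal is correct and follows essentially the same route as the paper: deduce surjectivity from Theorem \ref{t-surj}, use $b_1(X)=2\dim X=2\dim A$ to get maximal Albanese dimension, apply Theorem \ref{t-K} to conclude $a$ is birational, and then transfer $\kappa_S=0$ to $A$ by birational invariance and invoke \cite[Prop.\ 2.3.2]{HP13}. The paper's own proof is just a terser version of this same assembly.
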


\begin{proof}
From Theorems \ref{t-surj} and \ref{t-K} it follows that $a : X \to A$ is birational. However, $\kappa_S$ is a birational invariant for smooth projective varieties. Hence, $\kappa_S(A)=0$, which in fact is equivalent to $A$ being ordinary \cite[Prop. 2.3.2]{HP13}.
\end{proof}


\begin{thebibliography}{asdf}
\bibitem[BS13]{BS13}
M.~Blickle, K.~Schwede, {\it {$p^{-1}$}-linear maps in algebra and
  geometry}, Commutative algebra, Springer, New York, 2013, pp.~123--205.

\bibitem[C98]{C98} A. Chambert-Loir, {\it Cohomologie cristalline: un survol.} Exposition. Math. {\bf 16} (1998), 333--382
\bibitem[CH09]{CH09} C. D. Hacon, J. A. Chen, {\it On Ueno's Conjecture K.}   Math. Ann. 345 (2009), no. 2, 287--296.
\bibitem[E87]{E87} T. Ekedahl, {\it Foliation and inseparable morphism}, Proceedings of Symposia in Pure Mathematics,
V. 46 (1987), 139--149.
\bibitem[E88]{E88}
T. Ekedahl, {\it Canonical models of surfaces of general type in
  positive characteristic}, Inst. Hautes \'Etudes Sci. Publ. Math. (1988),
  no.~67, 97--144.
\bibitem[G04]{G04}
O.~Gabber, {\it Notes on some {$t$}-structures}, Geometric aspects of
  {D}work theory. {V}ol. {I}, {II}, Walter de Gruyter GmbH \& Co. KG, Berlin,
  2004, pp.~711--734.

\bibitem[HP13]{HP13} C. D. Hacon, Z. Patakfalvi, {\it Generic vanishing in characteristic
$p>0$ and the characterization of ordinary abelian varieties.} Amer. Jour. Math. 138 (2016), no. 4,
963--998.
\bibitem[K81]{Kawamata81} Y. Kawamata,
{\it Characterization of abelian varieties}, Compos. Math. 43, 1981,
253--276.
\bibitem[LS77]{LS77} H. Lange, U. Stuhler,
{\it Vektorbundel  auf  Kurven  und  Darstellungen  Fundamental-
gruppe}, Math Z. 156 (1977), 73--83.
\bibitem[L09]{L09}  C. Liedtke,
{\it Algebraic surfaces in positive characteristic}, in Birational Geometry, Rational Curves, and Arithmetic, Springer (2013), 229--292.
\bibitem[MvdG]{MvdG}
Ben Moonen and Van der Geer,
{\it Abelian varieties}, http://staff.science.uva.nl/~bmoonen/boek/Isogs.pdf
\bibitem[M81]{M81}
S.~Mukai, {\it Duality between {$D(X)$}\ and {$D(\hat X)$}\ with its
  application to {P}icard sheaves}, Nagoya Math. J. \textbf{81} (1981),
  153--175.
\bibitem[S58]{S58}
J.-P. Serre, {\it Quelques propri\'et\'es des vari\'et\'es ab\'eliennes
  en caract\'eristique {$p$}}, Amer. J. Math. \textbf{80} (1958), 715--739.
\bibitem[Z16]{Z16}
L.~Zhang, {\it Abundance for non-uniruled 3-folds with non-trivial
  albanese maps in positive characteristics},
  arXiv:abs/1610.03637 (2016).

\end{thebibliography}
\end{document}